\def\Z{\mathbb Z}
\newcommand{\R}{\mathbb{R}}
\newcommand{\K}{\mathbb{K}}
\newcommand{\m}{\mu}
\newtheorem{lemma}{Lemma}[section]
\newtheorem{theorem}[lemma]{Theorem}
\newtheorem{proposition}[lemma]{Proposition}
\newtheorem{question}[lemma]{Question}
\date{}
\title{Affine invariant points and new constructions}
\author{Ivan Iurchenko}
\newcommand\address{\noindent\leavevmode

\noindent
Ivan Iurchenko, \\
Dept.~of Math.~and Stat.~Sciences,\\
University of Alberta, \\
Edmonton, Alberta, Canada, T6G 2G1.\\
\texttt{\small e-mail:  iurchenk@ualberta.ca}

}
\begin{document}

\maketitle

\begin{abstract}
In~\cite{branko} Gr{\"u}nbaum asked if the set of all affine invariant points of a given convex
body is equal to the set of all points invariant under every affine automorphism of the body. In~\cite{ivan} we have proven the case of a body with no
 nontrivial affine automorphisms. After some partial results (\cite{aip},\cite{newaip}) the problem was
solved in positive by Mordhorst~\cite{Olaf}. In this note we provide an alternative proof of the affirmative answer, developing the ideas of~\cite{ivan}.
Moreover, our approach allows us to construct a new large class of affine invariant points.

\bigskip

\noindent {\bf Keywords:}
affine invariant points, symmetry, convex geometry.

\end{abstract}

\section{Introduction}
\label{intro}
Let $\K^n$ be the set of all convex bodies in $\R^n$ and let $P:\K^n\to\R^n$ be a function satisfying the following two conditions:

\noindent
1. For every nonsingular affine map $\varphi:\R^n\to\R^n$ and every convex body $K\in\K^n$ one has $P(\varphi(K))=\varphi(P(K)).$

\noindent
2. $P(K)$ is continuous in the Hausdorff metric.

\noindent
Such a function $P$ is called an {\it affine-invariant point}. The centroid and the center of the John ellipsoid (the ellipsoid of maximal volume contained
in a given convex body) are examples of affine-invariant points.

Let $\mathcal{P}$ be the set of all affine-invariant points in $\R^n.$ It was shown in~\cite{aip} that $\mathcal{P}$ is an affine subspace of the space of
continuous functions on $\K^n$ with values in $\R^n.$ Gr{\"u}nbaum~\cite{branko} asked
a natural question: how big is the set  $\mathcal{P}?$ In particular, how to describe the set $\mathcal{P}(K)=\{P(K)\mid P\in \mathcal{P}\}$ for a given $K\in\K^n?$  Denote the set of points fixed under affine maps of $K$ onto itself  by $\mathcal{F}(K).$
Gr{\"u}nbaum observed that $\mathcal{P}(K)\subset \mathcal{F}(K)$ and asked the following question:
\begin{question}\label{question}Is the set $\mathcal{P}$ big enough to ensure that $\mathcal{P}(K)= \mathcal{F}(K)$ for every $K\in\K^n?$
\end{question}
In~\cite{aip}, Meyer, Sch{\"u}tt and Werner proved that the set of convex bodies $K$ for which $\mathcal{P}(K)=\R^n$ is dense in $\K^n.$ Then the author showed that
if $\mathcal{F}(K)=\R^n$ then  $\mathcal{P}(K)=\R^n$~\cite{ivan}. Very recently, using a completely different approach, Mordhorst~\cite{Olaf} has shown the affirmative answer to the Question~\ref{question}. This proof used a previous development by P. Kuchment~\cite{k1,k2}. The purpose of this note is to show that the method of~\cite{ivan} can be also used to answer Question~\ref{question},
providing a new proof. Moreover, we construct a new large class of affine invariant points.

\section{Definitions and Notation}
\label{notat}

Recall some basic notations from group theory.

The group of all invertible linear transformations of $\R^n$ is denoted by $GL(n,\R).$ The group of all invertible linear transformations with the determinant equal to $1,$ i.e. the transformations which preserve volume and orientation is denoted by $SL(n,\R).$

For the purposes of the current paper we will use the group of all linear transformations preserving volume but not necessarily preserving orientation, i.e. the transformations with the determinant equal $\pm1$ denoted by $SL^-_n.$

The group of all affine transformations of $\R^n$ is denoted by $\mathit{Aff}(n).$ It may be represented as $GL(n)\ltimes \R^n$ with the rule $(r,x)(a)=r(a)+x$ where~$r\in GL(n),$ $x,a\in \R^n$.

The unit Euclidian ball in $\R^n$ is denoted by $B^n_2.$ The Euclidian norm of a vector is denoted by $\left|x\right|.$ The Lebesgue measure on $\R^n$ is denoted by $\m.$

A right (left) Haar measure is a measure on a locally compact topological group that is preserved under multiplication by the elements of the group from the right (left). The Lebesgue measure is an example of a Haar measure on $\R^n.$ Right and left Haar measures are unique up to multiplication however, not necessarily equal to each other. In this paper we always use a left Haar measure and denote the Haar measure of a set $X$ by $\mathrm{meas}(X).$

$\mathit{SAff}(n)$ is the group of all affine transformations of $\R^n$ preserving volume. This group may be represented as a  semidirect product of the group of all matrices with determinants equal to $\pm1$ and $\R^n$ with the rule $(r,x)(a)=r(a)+x$ for every $r\in GL(n) \ \text{with}\ \mathrm{det}(r)=\pm1, \ x\in \R^n$. $\mathit{SAff}(n)$ is equipped with the Haar measure, which is the product of Haar measures on the group of all matrices with the determinant equal to $\pm1$ and the group $\R^n.$

The Hausdorff metric is a metric on $\K^n,$ defined as
$$d_H(K_1,K_2)=\min\{\lambda\ge0:K_1\subset K_2+\lambda B_2^n ;K_2\subset K_1+\lambda B_2^n  \}.$$

By $\mathbb{K}^n_1$ we denote the set of all convex compact sets in $\R^n$ with volume $1$.

\section{Affine Invariant Points}

For a given convex body $K\in\K^n$ a family of affine invariant points is constructed by taking an arbitrary point $v$ and averaging all possible affine transformations of this point with the weight

$$F=F_K: \K^n\to C(SAff(n))$$
defined by
$$F_K(L)(\varphi)=\m(\varphi^{-1}(L)\cap K), L\in\K^n, \varphi\in SAff(x).$$

Let $k\ge1$ be an integer. For $L\in \K_1^n$ define the affine invariant point $T_{k,K,v}$ by
\begin{gather}\label{affp}
T_{k,K,v}(L)= \left({\int\limits_{SAff(n)}F^k(L)(\varphi)d\varphi}\right)^{-1}\int\limits_{SAff(n)}F^k(L)(\varphi)\varphi(v)d\varphi.
\end{gather}
In general, for $L\in\K^n$ we set

\begin{gather}
T_{k,K,v}(L)= |L|^{1/n}T_{k,K,v}(L/|L|^{1/n}).\label{affp2}
\end{gather}

\begin{theorem}\label{mainprop}
For a given convex body $K$  and a vector $v\in \mathcal{F}(K),$ the function $T_{k,K,v}:\K^n\to\R^n,$ defined in~\eqref{affp} has the following properties:

\noindent
1. There exists $k_0\in \Z_+$ such that for every $k\ge k_0,$ $T_{k,K,v}(L)$ is defined for all $L\in\K^n.$

\noindent
2. $T_{k,K,v}$ is an affine invariant point if defined.

\noindent
3. $T_{k,K,v}(K)\to v,\;k\to\infty.$
\end{theorem}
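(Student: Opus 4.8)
The three assertions will be proved in order, the analytic heart being the convergence estimate in~(1). Throughout write $\varphi=(r,x)\in SAff(n)$ with $r\in SL^-_n$ and $x\in\R^n$, so that $\varphi^{-1}(L)=r^{-1}(L)-r^{-1}(x)$, and recall (Section~\ref{notat}) that the Haar measure on $SAff(n)$ is the product $d\varphi=d\mathrm{meas}(r)\,dx$ of Haar measure on $SL^-_n$ and Lebesgue measure on $\R^n$. By the normalization~\eqref{affp2} it suffices to treat $L\in\K^n_1$. The first step is to integrate out the translation. For fixed $r$, substituting $u=r^{-1}(x)$ (Jacobian $1$) and using Fubini gives $\int_{\R^n}F_K(L)(r,x)\,dx=\m(K)$, while trivially $F_K(L)(r,x)\le h_L(r):=\sup_{u}\m(r^{-1}(L)\cap(K+u))$. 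Hence $\int_{\R^n}F^k_K(L)(r,x)\,dx\le \m(K)\,h_L(r)^{\,k-1}$, and everything reduces to showing $\int_{SL^-_n}h_L(r)^{\,k-1}\,d\mathrm{meas}(r)<\infty$ for $k$ large.

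The crux is a geometric decay estimate for $h_L$ together with a comparison against the growth of Haar measure. Using the decomposition $r=k_1 a k_2$ with $k_1,k_2$ orthogonal and $a=\mathrm{diag}(e^{t_1},\dots,e^{t_n})$, $\sum_i t_i=0$, and the fact that $K$ lies in a fixed ball $RB^n_2$ (after translation), I would bound the overlap by the product over coordinates of the widths of $a^{-1}(k_1^{-1}L)$ capped at $2R$, obtaining $h_L(r)\le C_{L}\exp\!\big(-\tfrac12\sum_i|t_i|\big)$ for $|t|$ large, where the finite constant $C_L$ absorbs the ($L$-dependent) width thresholds. On the other hand the Haar density on the Weyl chamber is $\asymp\prod_{i<j}|\sinh(t_i-t_j)|\le e^{\langle 2\rho,t\rangle}\le e^{(n-1)\sum_i|t_i|}$, up to the compact factors $dk_1,dk_2$. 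Comparing rates, $h_L(r)^{\,k-1}\,d\mathrm{meas}(r)\lesssim_L \exp\!\big((-\tfrac{k-1}{2}+(n-1))\sum_i|t_i|\big)$, which is integrable once $k-1>2(n-1)$. Thus a single $k_0$ depending only on $n$ makes all integrals in~\eqref{affp} absolutely convergent for every $L$: the constants $C_L$ affect only finiteness, not the exponential rate, which is what yields uniformity in $L$. Positivity of the denominator is clear, since $F_K(L)>0$ on a nonempty open set of $\varphi$. This proves~(1).

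For~(2) I would first establish covariance under $\psi\in SAff(n)$ for $L\in\K^n_1$. The identity $F_K(\psi L)(\varphi)=\m(\varphi^{-1}\psi L\cap K)=F_K(L)(\psi^{-1}\varphi)$, together with left-invariance of Haar measure (substitute $\varphi\mapsto\psi\varphi$), turns the numerator of $T_{k,K,v}(\psi L)$ into $\int F^k_K(L)(\varphi)\,\psi(\varphi(v))\,d\varphi$ and leaves the denominator unchanged; writing $\psi(y)=Sy+b$ and using that the denominator normalizes the weight to a probability measure, the affine map $\psi$ passes through the barycenter, giving $T_{k,K,v}(\psi L)=\psi\big(T_{k,K,v}(L)\big)$. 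A general nonsingular $\psi$ factors as $\psi=D_\lambda\circ\psi_0$ with $D_\lambda$ a homothety and $\psi_0\in SAff(n)$; covariance under $D_\lambda$ is exactly the homogeneity~\eqref{affp2}, so the two combine to full affine covariance. Continuity in the Hausdorff metric follows because $L\mapsto F_K(L)(\varphi)$ is continuous for each $\varphi$ (the intersection volume varies continuously) and the bound from~(1) furnishes a locally uniform integrable dominating function, so dominated convergence applies to numerator and denominator, with the denominator bounded away from $0$. This gives~(2).

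Finally~(3) is a Laplace-type concentration. Since $\m(\varphi^{-1}(K)\cap K)\le\m(K)$ with equality exactly when $\varphi^{-1}(K)=K$, the weight $F_K(K)$ attains its maximum $\m(K)$ precisely on the compact stabilizer $G_K=\{\varphi\in SAff(n):\varphi(K)=K\}$, which is the group of affine automorphisms of $K$. Writing $T_{k,K,v}(K)-v=\big(\int F^k_K(K)\,d\varphi\big)^{-1}\int F^k_K(K)(\varphi)\,(\varphi(v)-v)\,d\varphi$ and splitting the domain into a neighborhood $A_\eps=\{F_K(K)>\m(K)-\eps\}$ of $G_K$ and its complement, I would bound the near part using $|\varphi(v)-v|\to0$ as $\varphi\to G_K$ (valid because $v\in\mathcal F(K)$ forces $\varphi(v)=v$ for $\varphi\in G_K$), and the far part using $F_K(K)\le\m(K)-\eps$ there, with the decay of~(1) controlling the linear growth of $|\varphi(v)-v|$; the ratio of the far contribution to $\int_{A_{\eps/2}}F^k_K(K)\ge(\m(K)-\eps/2)^k\,\mathrm{meas}(A_{\eps/2})$ tends to $0$ geometrically in $k$. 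Letting first $k\to\infty$ and then $\eps\to0$ yields $T_{k,K,v}(K)\to v$. The main obstacle is~(1): making the geometric decay of $h_L$ precise and winning the exponential race against Haar measure uniformly in $L$; once that estimate is in hand, parts~(2) and~(3) follow by Haar invariance and the standard concentration argument.
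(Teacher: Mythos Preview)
Your argument is correct; parts~(2) and~(3) follow the paper's line almost verbatim (left Haar invariance plus the barycentric identity for covariance, dominated convergence for continuity, and Laplace-type concentration on the compact stabiliser $G_K$, which the paper isolates as the separate Proposition~\ref{conv}). The genuine difference is in part~(1). The paper encloses $K,L$ in a common ball, bounds the overlap by a \emph{single} slab of width $2\lambda_n$ and the $x$-support by a ball of radius $\lambda_1+1$, reducing matters to $\int_{SL^-_n}\lambda_1^{\,n+\alpha}\lambda_n^{\,p}\,dM\le\int_{SL^-_n}\|M\|^{-q}\,dM$ via $\lambda_n\le\lambda_1^{-1/(n-1)}$; finiteness of the last integral then comes from an elementary covering trick (Lemma~\ref{small}) showing that $\mathrm{meas}(S_{2^l})$ grows at most geometrically in $l$, so a geometric series converges once $q$ is large. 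You instead integrate the translation out exactly, bound the remaining $F^{k-1}$ by the full product of capped widths, and invoke the Cartan $KAK$ decomposition with the explicit density $\prod_{i<j}\sinh(t_i-t_j)$ to win the exponential race. Your route yields an explicit threshold $k_0$ of order $2n$ at the price of importing the Haar integration formula on $SL_n$; the paper's route is entirely elementary (no structure theory at all) but leaves $k_0$ implicit through the unknown covering cardinality $|N|$. One small omission: in~(1) you only wrote the estimate for the denominator; the numerator carries the extra weight $|\varphi(v)|\le C(1+\|r\|)\le C\big(1+e^{\frac12\sum_i|t_i|}\big)$, which your exponential comparison absorbs after raising $k_0$ by one --- the paper handles this uniformly through the parameter $\alpha$ in Proposition~\ref{gewd}.
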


Theorem~\ref{mainprop} implies that for every $K\in \K^n$ and every $v\in\mathcal{F}(K)$ we can find an affine invariant point $F$ such that $F(K)$ is arbitrarily close to $v.$ However, this implies that every point in $\mathcal{F}(K)$ can be obtained as an affine point of $K$ because the set of all affine points is an affine space~\cite{aip}.

\section{Technical Part}

To prove Theorem~\ref{mainprop} we will require some tools for integration over the group $\mathit{SAff}(n).$

For a matrix $A\in GL(n,\R)$ the ordered sequence $\lambda_1\ge\lambda_2\ge\dots\ge\lambda_n>0$  of the singular values of the matrix A, is the sequence of all
eigenvalues of $\sqrt{AA^*}$ counting multiplicities; see e.g.~\cite{sing}.
In the case $A\in SL^{-}_{n}$ we have $1=\left|\mathrm{det}(A)\right| = \prod\limits_{i=1}^n \lambda_i.$
For a matrix  $A\in GL(n,\R)$ we denote by $\|A\|$ its operator norm $\ell_2\to\ell_2,$ that is
$$\|A\|=\sup\limits_{|x|=1}|Ax|.$$
Note that singular values of $A$ give a convenient description of the norm $\|A\| = \lambda_1.$

For $R\ge1$ the ``ball" $S_R$ is the set of all matrices $A\in SL^{-}_{n}$ such that $\|A\|\le R$.

Note that for $R_1, R_2\ge 1$ the following equality holds: $S_{R_1}S_{R_2} = S_{R_1R_2}.$ Indeed, by the property of the operator norm, $S_{R_1}S_{R_2} \subset S_{R_1R_2}.$ On the other hand, according to the polar decomposition, every $A\in S_{R_1R_2}$ may be represented in the form $A=UP,$ where $U$ is a unitary matrix and $P$ is positive Hermitian, see e.g.~\cite{polar}. Then $$A=UP^{\ln{R_1}/\ln{\left(R_1R_2\right)}}P^{\ln{R_2}/\ln{\left(R_1R_2\right)}},$$ with $UP^{\ln{R_1}/\ln{\left(R_1R_2\right)}}\in S_{R_1},\;P^{\ln{R_2}/\ln{\left(R_1R_2\right)}}\in S_{R_2}.$

\begin{lemma}\label{small}
For every $\varepsilon>0$ there exists a finite set $N\subset S_{2(1+\varepsilon)}$ such that for every integer $l\ge0$ one has
$$S_{2^l(1+\varepsilon)}\subset N^lS_{(1+\varepsilon)}.$$

\end{lemma}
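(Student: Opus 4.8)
The plan is to prove this by induction on $l$, using a suitably chosen finite net $N$ and the multiplicativity $S_{R_1}S_{R_2}=S_{R_1R_2}$ established above. The base cases $l=0$ and $l=1$ should hold essentially by construction: for $l=0$ the claim $S_{(1+\ve)}\subset N^0S_{(1+\ve)}=S_{(1+\ve)}$ is trivial, and for $l=1$ we need $S_{2(1+\ve)}\subset NS_{(1+\ve)}$, which will be the defining property of $N$. So the real content is to produce a finite set $N\subset S_{2(1+\ve)}$ with $S_{2(1+\ve)}\subset NS_{(1+\ve)}$, and then to bootstrap.

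First I would construct $N$. The key observation is that $S_{2(1+\ve)}$ is a compact subset of $SL^-_n$ (it is closed, being defined by $\|A\|\le 2(1+\ve)$ and $|\det A|=1$, and bounded in operator norm, hence bounded in any matrix norm; compactness then follows since $SL^-_n$ is closed in $GL(n,\R)$). Right multiplication by elements of $S_{(1+\ve)}$ on $SL^-_n$ is continuous, and for each fixed $A\in S_{2(1+\ve)}$ the set $A\cdot \Int(S_{(1+\ve)})$ — or more precisely $A\cdot S_{1+\ve/2}$, say, which has nonempty interior relative to $SL^-_n$ — is an open neighborhood of $A$ contained in $S_{2(1+\ve)}S_{(1+\ve)}$. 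Actually the cleanest route: the sets $\{A\cdot \Int(S_{1+\ve}) : A\in S_{2(1+\ve)}\}$ form an open cover of the compact set $S_{2(1+\ve)}$ (each point $A$ lies in $A\cdot\Int(S_{1+\ve})$ since $\id\in\Int(S_{1+\ve})$ relative to $SL^-_n$ — the identity has operator norm $1<1+\ve$, and nearby volume-preserving matrices stay below $1+\ve$). Extract a finite subcover indexed by a finite set $N=\{A_1,\dots,A_m\}\subset S_{2(1+\ve)}$; then $S_{2(1+\ve)}\subset \bigcup_i A_i S_{1+\ve} = NS_{(1+\ve)}$.

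Now I would run the induction. Assume $S_{2^l(1+\ve)}\subset N^lS_{(1+\ve)}$. Then
$$S_{2^{l+1}(1+\ve)} = S_2\cdot S_{2^l(1+\ve)} \subset S_2\cdot N^l S_{(1+\ve)}.$$
Hmm — I need to bring in $N$ at the outer level, not $S_2$. Better to write $S_{2^{l+1}(1+\ve)} = S_{2(1+\ve)}\cdot S_{2^l} $; but then the inner ball is $S_{2^l}$ not $S_{2^l(1+\ve)}$, so induction doesn't directly apply. The fix is to use $S_{2^{l+1}(1+\ve)} = S_{2(1+\ve)}\cdot S_{2^l}\subset S_{2(1+\ve)}\cdot S_{2^l(1+\ve)}$ and to strengthen the inductive hypothesis, OR — cleaner — decompose as $S_{2^{l+1}(1+\ve)} \subset S_{2(1+\ve)}\cdot S_{2^l}$, apply the $l=1$ net property after noting $S_{2^l}\subset S_{2^l(1+\ve)}$, so $S_{2^{l+1}(1+\ve)}\subset NS_{(1+\ve)}\cdot S_{2^l(1+\ve)}$... this still grows the inner radius. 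The genuinely clean decomposition is $S_{2^{l+1}(1+\ve)} = S_{2}\cdot S_{2^l(1+\ve)}$ together with $S_2\subset S_{2(1+\ve)}\subset NS_{(1+\ve)}$, giving $S_{2^{l+1}(1+\ve)}\subset NS_{(1+\ve)}S_{2^l(1+\ve)} = N\cdot S_{2^l(1+\ve)}\cdot S_{(1+\ve)}$ — wait, $S_{(1+\ve)}S_{2^l(1+\ve)} = S_{2^l(1+\ve)^2}$, too big again.

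Let me restructure: the right move is to write $S_{2^{l+1}(1+\ve)} \subset S_{2(1+\ve)}\cdot S_{2^l}$, then since $S_{2(1+\ve)}\subset N S_{(1+\ve)}$ we get $S_{2^{l+1}(1+\ve)}\subset N S_{(1+\ve)} S_{2^l} = N S_{2^l(1+\ve)}$, and now apply the inductive hypothesis $S_{2^l(1+\ve)}\subset N^l S_{(1+\ve)}$ to conclude $S_{2^{l+1}(1+\ve)}\subset N^{l+1}S_{(1+\ve)}$. This works, using only $S_{R_1}S_{R_2}=S_{R_1R_2}$ (for the step $S_{2^{l+1}(1+\ve)}\subset S_{2(1+\ve)}S_{2^l}$ write $2^{l+1}(1+\ve) = 2(1+\ve)\cdot 2^l$) and the base net property.

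The main obstacle is constructing the net correctly — specifically, verifying that $\id$ is an interior point of $S_{1+\ve}$ relative to the topology of $SL^-_n$, so that the translates $A\cdot\Int(S_{1+\ve})$ genuinely cover $S_{2(1+\ve)}$. This requires knowing that small perturbations within $SL^-_n$ of the identity remain in $S_{1+\ve}$, i.e. that $\{A\in SL^-_n : \|A\| < 1+\ve\}$ is a relatively open neighborhood of $\id$ — which follows from continuity of the operator norm — and then using compactness of $S_{2(1+\ve)}$ to pass from an open cover to a finite subcover. Everything after that is a short induction.
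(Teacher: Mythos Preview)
Your proof is correct and takes essentially the same approach as the paper: construct $N$ by compactness of $S_{2(1+\ve)}$ so that $S_{2(1+\ve)}\subset NS_{1+\ve}$, then induct using the multiplicativity $S_{R_1}S_{R_2}=S_{R_1R_2}$. The paper's inductive step decomposes as $S_{2^{l+1}(1+\ve)}=S_{2^l(1+\ve)}S_2\subset N^lS_{1+\ve}S_2=N^lS_{2(1+\ve)}\subset N^{l+1}S_{1+\ve}$, applying the inductive hypothesis first and the net property second --- just the mirror image of your final decomposition.
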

\begin{proof}
Since the set $S_{2(1+\varepsilon)}$ is compact, it can be covered by some finite collection of balls:
 $$S_{2(1+\varepsilon)}\subset\cup_{N_i\in N}N_iS_{1+\varepsilon}=NS_{1+\varepsilon}.$$
We will show by induction that the set $N$ satisfies the condition of the proposition. The base case for $l=0$ is trivial. Now we show the inductive step:

\begin{align*}S_{2^{l+1}(1+\varepsilon)}=S_{2^l(1+\varepsilon)}S_{2}\subset N^lS_{1+\varepsilon}S_2= N^lS_{2(1+\varepsilon)}\subset N^lNS_{1+\varepsilon}.
\end{align*}
\end{proof}

\begin{proposition}\label{gewd}
For every $n\ge2,\;\alpha\ge0$ there exists $p\ge1$ such that for any convex bodies $K, L$ the integral

\begin{gather*}
\int\limits_{SL^{-}_{n}}\int\limits_{\vphantom{SL^{-}_{n}}\R^n}\m^p\left(L\cap (M(K)+x)\right)\|M\|^\alpha dxdM
\end{gather*}
converges. Here $dM$ is a Haar measure on $SL^{-}_{n}.$
\end{proposition}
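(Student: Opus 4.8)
The plan is to bound the integrand by something integrable, exploiting the fact that $\m(L\cap(M(K)+x))$ is never larger than $\min\{\m(L),\m(M(K)+x)\}=\min\{|L|,|K|\}$, so it suffices to control where the integrand is not too small together with the decay coming from large $\|M\|$. Concretely, fix $R=\|M\|=\lambda_1$ and let $\lambda_n$ be the smallest singular value; since $\det M=\pm1$ we have $\lambda_n\ge\lambda_1^{-(n-1)}=R^{-(n-1)}$. The body $M(K)$ then contains a segment of length comparable to $R$ (in the direction of the top singular vector) and is ``thin'' only in directions where singular values are small; in any case $M(K)$ is contained in a box of dimensions roughly $\lambda_1\times\cdots\times\lambda_n$ (after rotation), so its diameter is at most $C(K)R$. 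Hence for the $x$-integral: $\m(L\cap(M(K)+x))$ vanishes unless $M(K)+x$ meets $L$, which forces $x$ to lie in $L-M(K)$, a set of volume at most $\mu(L+ C(K)R\,B_2^n)\le C'(L,K)R^n$ (for $R\ge 1$, using that $L$ is bounded). Thus
\begin{gather*}
\int_{\R^n}\m^p(L\cap(M(K)+x))\,dx\le (\min\{|L|,|K|\})^{p-1}\int_{\R^n}\m(L\cap(M(K)+x))\,dx = (\min\{|L|,|K|\})^{p-1}|L|\,|K|,
\end{gather*}
where the last equality is Fubini. So actually the $x$-integral is \emph{uniformly bounded} in $M$ by a constant $C(K,L)$ independent of $p\ge 1$ and of $M$; the only issue left is the $\|M\|^\alpha$ factor and the non-compactness of $SL_n^-$.

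The remaining step is therefore to show $\int_{SL_n^-}g(M)\,\|M\|^\alpha\,dM<\infty$, where $g(M)$ is the (bounded) $x$-integral above, \emph{provided $p$ is chosen large enough}. For this I need a genuinely decaying bound on $g(M)$ for large $\|M\|$, not merely boundedness. The improvement comes from noticing that when $\|M\|=R$ is large, $M(K)$ is a long thin body, so $M(K)+x$ can only have \emph{large} volume-overlap with the fixed bounded body $L$ on a small set of $x$: more precisely, $\m(L\cap(M(K)+x))$ is small for all $x$ once $R$ is large enough compared to the ``width'' of $L$, because a very elongated, volume-one-ish set intersected with a fixed bounded convex body has small volume. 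Quantitatively, writing $M=U\Sigma V$ with $\Sigma=\mathrm{diag}(\lambda_1,\dots,\lambda_n)$, one shows $\m(L\cap(M(K)+x))\le c(K,L)\,\lambda_2\cdots\lambda_n = c(K,L)\,\lambda_1^{-1}=c(K,L)R^{-1}$ for $R\ge R_0(K,L)$ — here the key input is that $L$ fits inside a slab of bounded width perpendicular to the long axis of $M(K)$, cutting down one dimension of the overlap by a factor $\sim R$ while the other dimensions contribute at most the product of the remaining singular values, which is $\lambda_1^{-1}$ because $\prod\lambda_i=1$. Combining, $g(M)\le C(K,L)\min\{1,R^{-1}\}^{p-1}$ roughly (more carefully, one power of $\min\{1,R^{-1}\}$ per factor beyond the first, times the Fubini bound on one factor), so $g(M)\|M\|^\alpha\lesssim R^{\alpha-(p-1)}$ for $R\ge 1$.

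It then remains to integrate $R^{\alpha-(p-1)}$ against the Haar measure of $SL_n^-$ over the region $\|M\|\in[2^l,2^{l+1})$. This is exactly where Lemma~\ref{small} enters: with $N\subset S_{2(1+\varepsilon)}$ finite of cardinality $|N|$, we get $\mathrm{meas}(S_{2^l(1+\varepsilon)})\le |N|^l\,\mathrm{meas}(S_{1+\varepsilon})$, so the Haar measure of the ``shell'' $\{\|M\|\le 2^l(1+\varepsilon)\}$ grows at most geometrically, like $|N|^l$. On the shell $\|M\|\sim 2^l$ the integrand is $\lesssim (2^l)^{\alpha-(p-1)}$, so the shell contributes $\lesssim |N|^l (2^l)^{\alpha-(p-1)} = (|N|\cdot 2^{\alpha-(p-1)})^l$ up to constants, and this is summable over $l$ once $p$ is chosen so that $2^{p-1-\alpha}>|N|$, i.e. $p>1+\alpha+\log_2|N|$. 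Summing the geometric series finishes the proof. I expect the main obstacle to be making the decay estimate $\m(L\cap(M(K)+x))\lesssim \|M\|^{-1}$ (and, more generally, getting one factor of $\|M\|^{-1}$ from each excess copy of $F$) fully rigorous for arbitrary convex $K,L$: one must handle the rotations $U,V$ in the singular value decomposition uniformly, control the shape of $M(K)$ via the singular values, and be careful that constants depend only on $K$ and $L$ and not on $M$; the bookkeeping of how many powers of $\min\{1,\|M\|^{-1}\}$ survive after using one factor for the Fubini bound also needs care, but none of this is deep — it is the geometry of a long thin box meeting a fixed bounded body.
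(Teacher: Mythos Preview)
Your argument is correct and shares the same skeleton as the paper's proof---bound the intersection volume pointwise in $\|M\|$, control the $x$-integral, reduce to $\int_{SL_n^-}\|M\|^{-q}\,dM<\infty$ for large $q$, and finish with the shell decomposition via Lemma~\ref{small}---but the intermediate estimates differ in a way worth noting. The paper first encloses $K,L$ in a common ball and rescales, reducing to $K=L=B_2^n$; it then bounds $\mu\bigl(B_2^n\cap(M B_2^n+x)\bigr)$ by the width $2\lambda_n$ of the thinnest slab containing the ellipsoid, controls the $x$-integral by the crude support bound $|x|\le\lambda_1+1$, and only afterward converts $\lambda_n$ into a power of $\lambda_1$ via $\lambda_n\le\lambda_1^{-1/(n-1)}$, arriving at decay $\|M\|^{\,n+\alpha-p/(n-1)}$. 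You instead slice perpendicular to the \emph{long} axis of $M(K)$, bounding the overlap directly by $\mathrm{diam}(L)\cdot\lambda_2\cdots\lambda_n=c(K,L)\,\lambda_1^{-1}$, and dispose of one factor of the $x$-integral exactly via the Fubini identity $\int_{\R^n}\mu\bigl(L\cap(M(K)+x)\bigr)\,dx=|L|\,|K|$. This gives the cleaner exponent $\alpha-(p-1)$ and avoids the reduction to balls, at the price of the slicing estimate you flag at the end; that estimate is indeed routine once one fixes the top left singular direction $u_1$ of $M$ and integrates $\Ind{L}\cdot\Ind{M(K)+x}$ along $u_1$ first.
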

\begin{proof}

There exists a radius $R>0$ such that the bodies $K, L$ are simultaneously contained within the ball $RB^n_2.$ Therefore,

\begin{align*}
&\int\limits_{SL^{-}_{n}}\int\limits_{\vphantom{SL^{-}_{n}}\R^n}\m^p\left(L\cap (M(K)+x)\right)\|M\|^\alpha dxdM
\\
&\le \int\limits_{SL^{-}_{n}}\int\limits_{\vphantom{SL^{-}_{n}}\R^n}\m^p\left(RB_2^n\cap (M(RB_2^n)+x)\right)\|M\|^\alpha dxdM
\\
&= R^{pn}\int\limits_{SL^{-}_{n}}\int\limits_{\vphantom{SL^{-}_{n}}\R^n}\m^p\left(B_2^n\cap (M(B_2^n)+\frac{x}{R})\right)\|M\|^\alpha dxdM
\\
&= R^{pn+n}\int\limits_{SL^{-}_{n}}\int\limits_{\vphantom{SL^{-}_{n}}\R^n}\m^p\left(B_2^n\cap (M(B_2^n)+x)\right)\|M\|^\alpha dxdM.
\end{align*}

It is enough to consider the convergence of the integral

\begin{gather}
\int\limits_{SL^{-}_{n}}\int\limits_{\vphantom{SL^{-}_{n}}\R^n}\m^p\left(B_2^n\cap (M(B_2^n)+x)\right)\|M\|^\alpha dxdM.\label{afp}
\end{gather}

Note that  the lengths of semiaxes of the ellipsoid $M(B_2^n)$ are defined by the singular values $\lambda_1\ge\lambda_2\ge\dots\ge\lambda_n$ of $M$ in particular, the diameter of $M\left(B_2^n\right)$ equals $2\lambda_1$ and its minimal width equals $2\lambda_n.$ This means that for $|x|>\lambda_1+1$ the volume $\m\left(B_2^n\cap (M(B_2^n)+x)\right)=0.$ For all other $x$ the ellipsoid $MB_2^n$ is contained within the slab $L=\{y\in\R^n:|\langle y,u\rangle|\le\lambda_n\}$ for some vector $u.$ Therefore,
$$\m\left(B_2^n\cap (M(B_2^n)+x)\right)\le \m\left(B_2^n\cap (L+x)\right)\le 2\lambda_n\left|B^{n-1}_2\right|.$$
Summing up, the integral~\eqref{afp} is bounded by

\begin{align*}\nonumber
&\int\limits_{SL^{-}_{n}}\int\limits_{\vphantom{SL^{-}_{n}}\R^n}\m^p\left(B_2^n\cap (M(B_2^n)+x)\right)\|M\|^\alpha dxdM\\
&=\int\limits_{SL^{-}_{n}}\int\limits_{\vphantom{SL^{-}_{n}}|x|\le\lambda_1+1}\m^p\left(B_2^n\cap (M(B_2^n)+x)\right)\|M\|^\alpha dxdM\\
&\le \int\limits_{SL^{-}_{n}}\int\limits_{\vphantom{SL^{-}_{n}}|x|\le\lambda_1+1}\left(2\lambda_n|B^{n-1}_2|\right)^p \|M\|^\alpha dxdM \\
&\le \int\limits_{SL^{-}_{n}}(2\lambda_1)^{n}|B_2^n|\left(2\lambda_n|B^{n-1}_2|\right)^p \|M\|^\alpha   dM \\
&=2^{n+p}\left|B_2^{n-1}\right|^p\left|B_2^n\right|\int\limits_{SL^{-}_{n}}\lambda_1^{n+\alpha}\lambda_n^p \label{afp2}dM.
\end{align*}
Keeping in mind that $\prod\limits_{i=1}^n \lambda_i = 1$ one has
$$\lambda_1^{n+\alpha}\lambda_n^p\le \lambda_1^{n+\alpha} \left(\lambda_2\lambda_3\dots\lambda_n\right)^{p/{(n-1)}}= \lambda_1^{n+\alpha}\left(\frac{1}{\lambda_1}\right)^{p/{(n-1)}}=\lambda_1^{n+\alpha-\frac{p}{n-1}}.$$

Finally, putting $q=-n-\alpha+\frac{p}{n-1}$ it is enough to show that there exists sufficiently big $q>0$ such that the integral
\begin{gather}
\int\limits_{SL^{-}_{n}}\|M\|^{-q} dM
\end{gather}
is convergent. To prove this we split the group $SL^{-}_{n}$ into smaller sets
$$S_{2^l}\setminus S_{2^{l-1}}, l\ge1.$$ Then

\begin{gather}\label{convergence}
\int\limits_{SL^{-}_{n}}\|M\|^{-q} dM = \sum\limits_{l=1}^{\infty}\int\limits_{S_{2^l}\setminus S_{2^{l-1}}}\|M\|^{-q}dM\le \sum\limits_{l=1}^\infty 2^{-lq}\mathrm{meas}(S_{2^{l}}).
\end{gather}
According to Lemma~\ref{small}, there exists a set $N$ such that \\ $\mathrm{meas}(S_{2^{l}})\le |N|^{l}\mathrm{meas}(S_{2(1+\varepsilon)}).$ Therefore, the series~\eqref{convergence} is bounded by a geometric series with the ratio $2^{-q}\left|N\right|$ which is convergent for $q>\log_2|N|.$
\end{proof}

\begin{proposition}\label{conv}
Let $G$ be a locally compact topological group and $dx$ be a Haar measure on $G.$ Let continuous functions $f, g$ satisfy the following conditions:

\noindent
\item[1.] For every $x\in G: 0\le f(x)\le 1.$

\noindent
\item[2.] There exists $x_0\in G$ such that $f(x_0)=1.$ Moreover, if $x_1\in G$ is such that $f(x_1)=f(x_0)=1$ then $g(x_1)=g(x_0).$

\noindent
\item[3.] There exists a constant $c<1$ and a compact $K$ such that for every $x\in G\setminus K,   f(x)<c.$

\noindent
\item[4.] There exists $k_0\ge 1$ such that for every $k\ge k_0$ the integrals
$$\int_G f^k(x)dx,\;\int_G f^k(x)\left|g(x)\right|dx$$ are convergent.

Then
 $$\lim_{k\to\infty}\frac{\int_G f^k(x)g(x)dx}{\int_G f^k(x)dx}=g(x_0).$$
\end{proposition}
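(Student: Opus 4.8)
The plan is to reduce everything to a neighbourhood of the set where $f$ attains its maximum, using a standard Laplace-type concentration argument adapted to the locally compact group setting. Set $A = \int_G f^k(x)\,dx$ and $B = \int_G f^k(x)g(x)\,dx$; we must show $B/A \to g(x_0)$. The key preliminary observation is that the level set $Z = \{x : f(x) = 1\}$ is nonempty (it contains $x_0$) and, by continuity together with condition~3, is contained in the compact set $K$, hence $Z$ is itself compact; and by condition~2, $g$ is constant on $Z$, equal to $g(x_0)$. So heuristically the mass of $f^k$ piles up near $Z$ as $k \to \infty$, and on that region $g \approx g(x_0)$.

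First I would establish a lower bound on the denominator that beats any exponential decay: fix a small $\delta>0$ and let $U_\delta = \{x : f(x) > 1-\delta\}$, an open neighbourhood of $Z$. Since $x_0 \in U_\delta$, $U_\delta$ has positive Haar measure (open sets in a locally compact group with Haar measure have positive measure), and $f^k \geq (1-\delta)^k$ on $U_\delta$, so $A \geq (1-\delta)^k \,\mathrm{meas}(U_\delta)$ for all $k$. Next I would split $B - g(x_0)A = \int_G f^k(x)\bigl(g(x) - g(x_0)\bigr)\,dx$ into the integral over $U_\delta$ and over $G \setminus U_\delta$. On $U_\delta$, by continuity of $g$ and compactness of $Z$, one can choose $\delta$ small enough that $|g(x) - g(x_0)| < \eps$ for all $x \in U_\delta$ (this needs a short argument: if not, there is a sequence $x_j$ with $f(x_j) \to 1$ but $|g(x_j) - g(x_0)| \geq \eps$; since eventually $x_j \in K$ by condition~3, pass to a convergent subsequence with limit $x_* \in Z$, contradicting continuity of $g$ and $g|_Z \equiv g(x_0)$). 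Thus the $U_\delta$-part of $|B - g(x_0)A|$ is at most $\eps A$.

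It remains to bound the tail $\int_{G\setminus U_\delta} f^k(x)|g(x) - g(x_0)|\,dx$ and show it is $o(A)$. Split $G \setminus U_\delta$ further into $(G\setminus U_\delta)\cap K$ and $G \setminus (U_\delta \cup K)$. On the first piece, $f \leq 1-\delta$ (actually $\leq \max_{K\setminus U_\delta} f =: c_1 < 1$ by compactness and continuity), so for $k \geq k_0$ this contributes at most $c_1^{k-k_0}(1-\delta)^{\,?}\cdots$ — more cleanly, $\int_{(G\setminus U_\delta)\cap K} f^k |g-g(x_0)| \leq c_1^{\,k - k_0}\int_K f^{k_0}|g - g(x_0)| \leq c_1^{\,k-k_0} C$, which decays geometrically. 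On the second piece, $f < \min(c, 1-\delta) =: c_2 < 1$, and using condition~4, $\int_{G\setminus(U_\delta\cup K)} f^k|g - g(x_0)| \leq c_2^{\,k-k_0}\int_G f^{k_0}\bigl(|g| + |g(x_0)|\bigr) \leq c_2^{\,k-k_0} C'$, again geometric decay. Since $A \geq (1-\delta)^k\,\mathrm{meas}(U_\delta)$ and $c_1, c_2 < 1 - \delta$ after possibly shrinking $\delta$ so that $1-\delta > \max(c_1,c_2)$ — wait, that direction is automatic only for $c_2$; for $c_1$ one instead notes $c_1 < 1$ is fixed once $\delta$ is fixed, and we simply need $c_1^k / (1-\delta)^k \to 0$, i.e. $c_1 < 1 - \delta$, which we arrange by choosing $\delta$ small first and then noting $c_1 = \max_{K \setminus U_\delta} f$ could be close to $1$; the safe fix is to observe $c_1 < 1$ strictly and accept a weaker statement, or to absorb: pick $\delta' < \delta$, redo with $U_{\delta'}$ in the denominator bound while keeping $U_\delta$ in the numerator split, so the denominator is $\geq (1-\delta')^k\,\mathrm{meas}(U_{\delta'})$ and we need $c_1 < 1-\delta'$ — still not automatic. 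The clean resolution, which I would use, is: the tail is $\leq C''\,c_1^{k}$ with $c_1 < 1$, and the denominator is $\geq \mathrm{meas}(U_\delta)(1-\delta)^k$; the ratio tail$/A$ is $\leq (C''/\mathrm{meas}(U_\delta))\,(c_1/(1-\delta))^k$, so we must choose $\delta$ at the very start small enough that $1 - \delta > c_1^{\mathrm{orig}}$ where $c_1^{\mathrm{orig}} = \max_{K\setminus U_{\delta_0}} f$ for a reference $\delta_0$ — and indeed as $\delta \to 0$, $\max_{K \setminus U_\delta} f \to \max_{K \setminus Z} f < 1$ if $K \setminus Z$ is nonempty, so for $\delta$ small $\max_{K\setminus U_\delta} f < 1$ is bounded away from $1$ by a gap independent of small $\delta$, and then requiring $1 - \delta$ above that gap is a finite constraint. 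I expect this bookkeeping — choosing $\delta$ to simultaneously control the continuity modulus of $g$ near $Z$ and to separate $1-\delta$ from the sup of $f$ off a neighbourhood of $Z$ — to be the only real subtlety; everything else is routine. Combining, $|B/A - g(x_0)| \leq \eps + o(1)$, and letting $\eps \to 0$ finishes the proof.
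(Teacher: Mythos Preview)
Your approach is the same Laplace-type concentration argument as the paper's, and is essentially correct. The only place you stumble is the bookkeeping you yourself flag: using the single level set $U_\delta=\{f>1-\delta\}$ for both the numerator split and the denominator lower bound leaves you with $c_1\le 1-\delta$ rather than a strict inequality. In fact your own fix --- take $\delta'<\delta$ and use $U_{\delta'}$ for the denominator --- already works, and you dismiss it too quickly: on $G\setminus U_\delta$ one has $f\le 1-\delta$ by definition, so $c_1\le 1-\delta<1-\delta'$ automatically and $(c_1/(1-\delta'))^k\to 0$. By contrast, your later attempt via $\sup_{K\setminus Z}f$ does not work: that supremum is typically equal to $1$ (just not attained), so there is no $\delta$-independent gap to exploit.

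The paper organises the same idea more transparently by decoupling the two neighbourhoods from the start. First choose an open $U\supset Z$ on which $|g-g(x_0)|<\varepsilon$ (so $U$ is determined by $g$, not as a level set of $f$); then $C:=\sup_{G\setminus U}f<1$ by condition~3 off $K$ and by compactness on $K\setminus U$. Now pick any $D\in(C,1)$ and set $V=\{f>D\}$, an open neighbourhood of $Z$ automatically contained in $U$. The tail over $G\setminus U$ is at most $C^{\,k-k_0}\int_G f^{k_0}|g-g(x_0)|$, the denominator is at least $D^{\,k-k_0}\int_V f^{k_0}$, and $C<D$ finishes it. Fixing $U$ first and only then choosing $V$ is exactly what dissolves the circularity you were wrestling with.
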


  \begin{proof}

Note that the integral
$$\int_G f^k(x)|g(x)-g(x_0)|dx\le \int_G f^k(x)\left|g(x)\right|dx+ g(x_0)\int_G f^k(x)dx $$
is convergent for for $k\ge k_0.$
Passing to the new function $g-g(x_0)$ if needed, we may assume that $g(x_0)=0.$

The set  $N=f^{-1}(1)\subset K$ is closed and therefore compact. By the assumption of the proposition $g(N)=\{0\}.$ Fix $\varepsilon>0$ and consider a neighbourhood $U$ of  $N$ such that $|g|<\varepsilon$ on $U.$ There exists a positive constant $C<1$ such that $f<C$ outside of $U.$ Indeed, outside of $K$ the function $f$ is bounded from above by $c,$ on the compact set $K\setminus U$ the function $f$ is separated from $1$ by the compactness argument. By continuity of $f,$ there exists a constant $D\in(C,1)$ and a neighborhood $V\subset U$ of $N$ such that $D<f\le1$ on $V.$ Then

\begin{align*} \phantom{\le}\frac{\int_G f^k(x)\left|g(x)\right|\,dx}{\int_G f^k(x)dx}
 \le& \frac{\int_U f^k(x)\left|g(x)\right|dx+\int_{G\setminus U} f^k(x)\left|g(x)\right|dx}{\int_U f^k(x)\,dx}\\
 \le& \varepsilon+\frac{\int_{G\setminus U} f^k(x)\left|g(x)\right|\,dx}{\int_V f^k(x)\,dx} \\
 \le& \varepsilon+\frac{C^{k-k_0}\int_{G\setminus U} f^{k_0}(x)\left|g(x)\right|\,dx}{D^{k-k_0}\int_V f^{k_0}(x)\,dx}\to \varepsilon,\;k\to\infty.
\end{align*}
Sending $\varepsilon$ to $0$ we obtain the required statement.

\end{proof}

\begin{proof}[Proof of Theorem~\ref{mainprop}.]

For fixed $K$ and $v$ we will shorten the notation by writing $T_k$ instead of $T_{k,K,v}.$

\noindent
1. Proposition~\ref{gewd} applied with $\alpha=1$ (respectively, $\alpha=0$) implies that the integral in the numerator (respectively, denominator) is convergent.

\noindent
2. $T_k(cK)=cT_k(K)$ by the definition of $T_k.$\

\noindent
For every $\tau\in SAff(n)$ and $L\in\K^n_1:\; T_k(\tau(L))=\tau(T_k(L)).$

\noindent
Denote $$c=\left({\int_{SAff(n)}F^k(L)(\varphi)d\varphi}\right)^{-1}.$$ For arbitrary $\tau\in SAff(n)$ we have

\begin{align*}
T(\tau L)=&c \int\limits_{SAff(n)} F^k(\tau L)(\varphi)\varphi(v)d\varphi=c\int\limits_{SAff(n)}F^k(L)(\tau^{-1}\varphi)\varphi(v)d\varphi.
\end{align*}

\noindent
Replacing $\varphi$ by $\tau\varphi$ we get

\begin{align*}
T(\tau L)=c\int\limits_{SAff(n)} F^k(L)(\varphi)\tau(\varphi(v))d\varphi =\tau\left(c\int\limits_{SAff(n)} F^k(L)(\varphi)\varphi(v)d\varphi\right).
\end{align*}

The last equality holds because $cF^k(L)(\varphi)d\varphi$ is a probabilistic measure. Therefore, for every affine $\tau$ and every integrable function $f$ one has
$$\int\limits_{SAff(n)} \tau(f(\varphi))cF^k(L)(\varphi)d\varphi= \tau\left(\int\limits_{SAff(n)} f(\varphi)cF^k(L)(\varphi)d\varphi\right).$$

\noindent
Note that the function $$\frac{1}{\int_{S_R\times\R^n}F^k(L)(\varphi)d\varphi}\int\limits_{S_R\times\R^n}F^k(L)(\varphi)\varphi(v)d\varphi$$
is continuous as a function of $L$ by the Lebesgue's dominated convergence theorem because both integrals are uniformly bounded by a convergent integral by Proposition~\ref{gewd}. Then
$$T_k = \lim\limits_{R\to\infty}\frac{1}{\int_{S_R\times\R^n}F^k(L)(\varphi)d\varphi}\int\limits_{S_R\times\R^n}F^k(L)(\varphi)\varphi(v)d\varphi$$
is continuous.

\noindent
3. Convergence is the direct application of the Proposition~\ref{conv}, where  $f(\varphi)=F(K)(\varphi)$ and $g(\varphi)$ is $\varphi(v)$ taken coordinatewise. Similarly to the proof of the Proposition~\ref{gewd} the function $F(K)((A,x))$ is separated from $1$ when either $\|A\|$ or $\left|x\right|$ is big. Note that $F(K)(id)=1$ and if  $F(K)(\varphi)=1$ then $\varphi(K)=K$ which means $\varphi(v)=v$ because $v\in\mathcal{F}(K).$ Therefore,

$$\frac{1}{\int_{SAff(n)}F(K)^k(\varphi)d\varphi}\int\limits_{SAff(n)}F(K)^k(\varphi)\varphi(v)d\varphi\to id(v) = v,\;k\to\infty.$$
\end{proof}

%

\begin{thebibliography}{99}


\bibitem{sing} J. B. Conway,  {\em A course in functional analysis,}
 2nd ed., New York: Springer, 2007.

\bibitem{branko} B. Gr{\"u}nbaum, {\em Measures of symmetry for convex sets,}
 Proc. Sympos. Pure Math., Vol. VII, 1963, 233--270.

\bibitem{ivan} I. Iurchenko, {\em Affine-Invariant Points,} Presented at poster session at Informal
Analysis and Probability Seminar, October 17-19, 2014, Department
of Mathematics at University of Michigan, Ann Arbor, MI, USA.

\bibitem{k1} P. Kuchment, {\em , On a problem concerning affine-invariant points of convex
sets,} Optimizaciya 8, (1972), 48-51 (in Russian).

\bibitem{k2} P. Kuchment, {\em On a problem concerning affine-invariant points of convex
sets,} arXiv:1602.04377, Feb. 2016 (English translation of~\cite{k1}).


\bibitem{newaip} M. Meyer, C. Sch{\"u}tt, E. M. Werner, {\em New affine measures of symmetry for
convex bodies,} Adv. Math. 228, (2011), 2920-2942.


\bibitem{aip} M. Meyer, C. Sch{\"u}tt, E. M. Werner,
 {\em Affine invariant points,}
 Israel J. Math. 208 (2015), no. 1, 163--192.

\bibitem{Olaf} O. Mordhorst, {\em New results on affine invariant points,} arXiv:1509.06023, to appear in Israel J. Math.


\bibitem{polar} G. W. Stewart, {\em Introduction to matrix computations,}
 New York: Academic Press, 1973.


\end{thebibliography}

\vspace{1cm}

\address

\end{document}